\def\podu{{\sf pd}}   
\def\per{{\sf pm}}      
\def\perr{{\sf q}}        
\def\perdo{{\mathcal K}}   
\def\sfl{{\mathrm F}} 
\def\Z{\mathbb{Z}}                   
\def\Q{\mathbb{Q}}                   
\def\C{\mathbb{C}}                   
\def\O{{\mathcal O}}                     
\def\R{\mathbb{R}}                   
\def\N{\mathbb{N}}                   
\def\uhp{{\mathbb H}}                
\def\Mat{{\rm Mat}}              
\newcommand{\mat}[4]{
     \begin{pmatrix}
            #1 & #2 \\
            #3 & #4
       \end{pmatrix}
    }                                
\newcommand{\matt}[2]{
     \begin{pmatrix}                 
            #1   \\
            #2
       \end{pmatrix}
    }
\def\dR{{\rm dR}}                
\def\Hb{{\mathcal H}}                    
\def\GL{{\rm GL}}                
\def\F{{\mathcal F}}                     
\def\Im{{\rm Im}}                
\newcommand\SL[2]{{\rm SL}(#1, #2)}    
\def\L{{\mathcal L}}                     
\def\Aut{{\rm Aut}}              
\def\any{R}                          
\newcommand\ovl[1]{\overline{#1}}    
\def\tr{{\mathsf t}}                 
\def\perdo{{ X}}   
\def\pedo{  P}                  
\begin{document}

\title*{Quasi-modular forms attached to Hodge structures }

\author{Hossein Movasati}
\institute{Hossein Movasati \at  Instituto de Matem\'atica Pura e Aplicada, IMPA,
Estrada Dona Castorina, 110,
22460-320, Rio de Janeiro, RJ, Brazil, 
\email{hossein@impa.br} }

\maketitle

 \abstract*{
The space $D$  of Hodge structures on a fixed polarized lattice is known as a Griffiths period domain and its quotient by the isometry group of the lattice is the moduli of polarized Hodge structures 
of a fixed type. When $D$ is a Hermition symmetric domain then we have automorphic forms on $D$, which according to the Baily-Borel theorem, give an algebraic structure to 
the above-mentioned  moduli space.  In this article we slightly modify this picture by considering the space $U$ of polarized lattices in a fixed complex vector space with a fixed Hodge filtration 
and polarization. It turns out that the isometry group of the filtration and polarization, which is an algebraic group, acts on $U$ and the quotient is again the moduli of 
polarized Hodge structures. This formulation leads us to a notion of quasi-automorphic  forms which generalizes quasi-modular forms attached to elliptic curves.     
}
 \abstract{
The space $D$  of Hodge structures on a fixed polarized lattice is known as Griffiths period domain and its quotient by the isometry group of the lattice is the moduli of polarized Hodge structures 
of a fixed type. When $D$ is a Hermition symmetric domain then we have automorphic forms on $D$, which according to Baily-Borel theorem, they give an algebraic structure to 
the mentioned  moduli space.  In this article we slightly modify this picture by considering the space $U$ of polarized lattices in a fixed complex vector space with a fixed Hodge filtration 
and polarization. It turns out that the isometry group of the filtration and polarization, which is an algebraic group, acts on $U$ and the quotient is again the moduli of 
polarized Hodge structures. This formulation leads us to a notion of quasi-automorphic  forms which generalizes quasi-modular forms attached to elliptic curves.     
}
\medskip

{\noindent
\textbf{MSC (2010):} 
32G20, 11F46, 14C30, 14J15}

%
\keywords{Polarized Hodge structure, period map, algebraic de Rham cohomology}
%
%
%
%

\date{March 4, 2012}

In 1970 Griffiths in his article \cite{gr70} introduced the 
period domain $D$  and described a project to enlarge $D$ to a moduli space of degenerating polarized
Hodge structures. He also asked  for the existence of a certain automorphic form theory for $D$, generalizing the usual notion of automorphic forms on Hermitian symmetric domains. 
Since then there have been much effort made on  the first
part of Griffiths's project (see \cite{kaus00, ho10} and the 
references there). For the second part Griffiths himself introduced the theory of  automorphic cohomology; however,  the generating function role of automorphic forms is somewhat 
lacking in this theory.

Some years ago, I was looking for some analytic spaces over $D$ for which one may state the Baily-Borel theorem on the unique algebraic structure of quotients
of Hermitian symmetric domains by discrete arithmetic groups.  I realized that even
in the simplest case of Hodge structures, namely  $h^{01}=h^{10}=1$,  
such spaces are not well studied. This led me to the definition of a class of
holomorphic functions on the Poincar\'e upper-half-plane which generalize the classical modular
forms (see \cite{ho06-2}). Since a differential operator acts on them I called them differential modular forms. Soon after I realized that 
such functions play a central role in mathematical physics and, in particular, in mirror symmetry (see \cite{kon95} and the references therein). Inspired by this  special 
case of Hodge structures with its fruitful applications, I felt the necessity to develop as much as possible similar theories for an arbitrary type of Hodge structure.   

In this note we construct an analytic variety $U$ and an action of an 
algebraic group $G_0$ on $U$ from the right such that $U/ G_0$ is the 
moduli space of polarized Hodge structures of a fixed type. We may pose the following algebraization problem for $U$, in parallel to
the Baily-Borel theorem in \cite{babo66}:  construct  functions on $U$ which have some automorphic properties with respect 
to the action of $G_0$ and have some finite growth when a Hodge structure degenerates. There  must be enough  of them in order to enhance $U$ with a canonical structure of an 
algebraic variety such that the action 
of $G_0$ is algebraic. 
In the case for which the Griffiths period domain is Hermitian symmetric, for instance for the Siegel upper half-plane, this problem seems to be promising but needs a 
reasonable amount of work if one wants to construct such functions through the inverse of the generalized period maps (see \S\ref{h10=h01}). Among them are calculating explicit affine coordinates
in certain moduli spaces and calculating Gauss-Manin connections. Some main ingredients of such a study for K3 surfaces endowed with polarizations is 
already done by many authors, see for instance \cite{do10-1} and the references therein. 
For the case in which the Griffiths period domain is not Hermitian symmetric, we reformulate the algebraization problem further (see \S \ref{13sep2010})  and we solve it for the  Hodge numbers 
$h^{30}=h^{21}=h^{12}=h^{03}=1$ (see \S\ref{1111} and \cite{ho10*}). This gives us a first example of quasi-automorphic forms theory attached to a period domain 
which is not Hermitian symmetric.

The realization of the algebraization problem in the case of elliptic curves and the corresponding Hodge numbers $h^{10}=h^{01}=1$ clarifies many details 
of the previous paragraph; therefore, I explain it here (for more details see \cite{ho06-2}).  In this case
$U=\SL 2\Z\backslash \pedo$, where
$$
\pedo:=\{\mat {x_1}{x_2}{x_3}{x_4}\in \SL 2\C\mid \Im(x_1\ovl{x_3})>0\}.
$$ 
In order to find an algebraic structure on $U$ we 
work with the following family of elliptic curves:
$$
E_t: \ y^2-4(x-t_1)^3+t_2(x-t_1)+t_3=0,\ 
$$
where  the parameter $t=(t_1,t_2,t_3)$ is a point of the  affine variety
$$
T:=\{(t_1,t_2,t_3)\in \C^3 \mid  27t_3^2-t_2^3\neq0\}.
$$
The generalized period map
\begin{equation}
\label{3sep2010}
 \per: T\to U,
\end{equation}
$$
t\mapsto
\left [\frac{1}{\sqrt{-2\pi i}}\mat
{\int_{\delta_1}\frac{dx}{y}}
{\int_{\delta_1}\frac{xdx}{y}}
{\int_{\delta_2} \frac{dx}{y}}
{\int_{\delta_2} \frac{xdx}{y}} \right]
$$
is in fact a biholomorphism. 
Here, $[\cdot]$ means the equivalence class and $\{\delta_1,\delta_2\}$ is a basis of the $\Z$-module 
$H_1(E_t,\Z)$ with $\langle \delta_1,\delta_2\rangle=-1$.  
The algebraic group
$$
G_0= \{\mat{k}{k'}{0}{k^{-1}} \mid  \ k,k'\in\C,\  k\neq 0  \}
$$ 
acts from the right on $U$ by the usual multiplication of matrices.
Under $\per$ the action of $G_0$ is given by
$$
t\bullet g=(t_1k^{-2}+k'k^{-1},
t_2k^{-4}, t_3k^{-6}) ,
$$ 
$$
 t=(t_1,t_2,t_3)\in\C^3,\ 
 g=\mat {k}{k'}{0}{k^{-1}}\in G_0.
$$
In fact, $T$ is the moduli space of pairs $(E,\{\omega_1,\omega_2\})$, where $E$ is an elliptic curve and $\{\omega_1,\omega_2\}$ is a basis 
of $H_\dR^1(E)$ such that $\omega_1$ is represented by a differential form of the first kind and $\frac{1}{2\pi i}\int_{E} \omega_1\cup \omega_2=1$.

The algebra of quasi-modular forms arises in the following way: We consider the composition  of maps 
\begin{equation}
\label{khoshgel?}
\uhp \stackrel{i}{\hookrightarrow} P\to U\stackrel{\per^{-1}}{\to} T\hookrightarrow\tilde T,
\end{equation}
where $\uhp=\{\tau\in\C\mid \Im(\tau)>0\}$ is the upper half-plane,  
$$
i:\uhp\to P,\  i(\tau)=\mat{\tau}{-1}{1}{0},
$$ 
$P\to U$ is the quotient map and $\tilde T=\C^3$ is the underlying complex 
manifold of the affine variety ${\rm Spec}(\C[t_1,t_2,t_3])$. The pullback of the function ring $\C[t_1,t_2,t_3]$ of $\tilde T$ by the composition $\uhp\to \tilde T$
 is a $\C$-algebra which we call 
the $\C$-algebra of quasi-modular forms for $\SL 2\Z$. Three Eisenstein series
\begin{equation}
\label{eisenstein}
g_i(\tau)=a_k{\Big (}1+b_k\sum_{d=1}^\infty d^{2k-1}\frac{e^{2\pi i d \tau}}{1-e^{2\pi i d \tau}}{\Big )},\ \  k=1,2,3, 
\end{equation}
where 
$$
(b_1,b_2,b_3)=
(-24, 240, -504),\ \ (a_1,a_2,a_3)= (\frac{2\pi i}{12},12(\frac{2\pi i}{12})^2 ,8(\frac{2\pi i}{12})^3)
$$
are obtained by taking the pullback of the $t_i$'s. Our reformulation of the algebraization problem is based on (\ref{khoshgel?}) and the pullback argument, see \S\ref{13sep2010}. 

We fix some notations from linear algebra. For a basis $\omega_1,\omega_2,\ldots,\omega_h$ of a vector space we denote by $\omega$ an $h\times 1$ matrix whose entries are  the 
$\omega_i$'s. In this way we also say that $\omega$ is a basis of the vector space. 
If there is no danger of confusion we also use $\omega$ to denote an element of the vector space. 
We use $A^\tr$ to denote the transpose
of the matrix $A$.  Recall that if $\delta$ and $\omega$ are two bases  of a vector space, 
$\delta=p\omega$ for some $p\in \GL(h,\C)$ and a bilinear form on $V_0$ in the 
basis $\delta$ (resp. $\omega$) has
the matrix form $A$ (resp. $B$) then $pBp^\tr=A$. By $[a_{ij}]_{h\times h}$ we mean an $h\times h$ matrix whose $(i,j)$ entry is $a_{ij}$. 
\section{Moduli of polarized Hodge structures}
In this section we define the generalized period domain $U$ and we explain its comparison with the classical Griffiths period domain. 
\subsection{The space of polarized lattices}
\label{lattice}
We fix a $\C$-vector space $V_0$ \index{ $V_0$, a $\C$-vector space} of dimension $h$, a natural number $m\in\N$ 
and a $h\times h$ integer-valued matrix $\Psi_0$\index{$\Psi_0$, intersection matrix} such that the
associated bilinear form 
$$
\Z^h\times \Z^h\rightarrow \Z,\ (a,b)\rightarrow 
a^\tr\Psi_0 b
$$ 
is non-degenerate, symmetric if $m$ is even and skew if $m$ is 
odd. Note that, in the case of $\Z$-modules, by non-degenerate we mean that
the associated morphism 
$$
\Z^h\rightarrow (\Z^h)^\vee,\ a\rightarrow(b\rightarrow a^\tr\Psi_0 b)
$$ 
is an isomorphism, 
where $\vee$ means the dual of a $\Z$-module.

A lattice  $V_\Z$ \index{lattice} in $V_0$ is a $\Z$-module generated 
by a basis of
$V_0$. A polarized lattice \index{polarized lattice} $(V_\Z,\psi_\Z)$ of type
$\Psi_0$ is a lattice $V_\Z$ together with a bilinear map
$\psi_\Z:V_\Z\times V_\Z\rightarrow \Z$ such that in a $\Z$-basis of $V_\Z$,  
$\psi_\Z$ has the form $\Psi_0$. \index{$\psi_\Z$, polarization}

Let $\L$ be the set of polarized lattices of type $\Psi_0$ in $V_0$. 
It has a canonical structure of a complex manifold of dimension $\dim_\C(V_0)^2$. One can take a local chart around 
$(V_\Z,\psi_\Z)$ by fixing a basis of the $\Z$-module $V_\Z$.
\index{$\L$, space of
polarized lattices}
Usually, we denote an element of $\L$ by $x,y,\ldots$ and the associated 
lattice (resp. bilinear form) by $V_\Z(x),V_\Z(y),\ldots $ 
(resp. $\psi_\Z(x),\psi_\Z(y),\ldots$). Let $R$ be any subring of $\C$. For
instance, $R$ can be $\Q$, $\R$, $\C$, $\Z$. We define
$$
V_\any(x):=V_\Z(x)\otimes_\Z \any \hbox{ and } 
\psi_\any(x):V_\any(x)\times V_\any(x)
\rightarrow \any \hbox{ the induced map. }
$$
\index{$V_\any(x)$, $\any$-module}\index{$\psi_\any(x)$, polarization}
Conjugation with respect to $x\in\L$
of an element $\omega=\sum_{i=1}^h a_i\delta_i\in V_0$, where $V_\Z(x)=\sum_{i=1}^h \Z \delta_i$, 
is defined by 
$$
\overline{\omega}^x:=\sum_{i=1}^h \overline a_i\delta_i,
$$\index{$\overline{a}^x$, conjugation of $a$ with respect to $x$}
where $\overline s,\ s\in \C$ is the usual conjugation of complex numbers. 
\subsection{Hodge filtration}
\label{hodgefilsection}
We fix Hodge numbers \index{Hodge numbers}
$$h^{i,m-i}\in \N\cup\{0\},\ 
h^i:=\sum_{j=i}^m h^{j,m-j},\  i=0,1,\ldots,m,\ 
h^0=h
$$\index{$h^{i,m-i}$, $h^i$, Hodge numbers}
a filtration 
\begin{equation}
\label{hodgefiltration}
F^\bullet_0: \{0\}=F^{m+1}_0\subset F^m_0\subset \cdots\subset F^1_0\subset 
F^0_0=V_0,\  dim(F_0^i)=h^i
\end{equation}
on $V_0$ and a bilinear form
$$
\psi_0: V_0\times V_0\to \C
$$
such that in a basis of $V_0$ its matrix is $\Psi_0$ and it satisfies
\begin{equation}
\label{17oct2011}
\psi_0(F^i_0,F^j_0)=0,\ \forall i,j,\  i+j>m.
\end{equation}
A basis $\omega_i,i=1,2,\ldots,h$ of $V_0$ is compatible with 
the filtration $F^\bullet_0$ if $\omega_i,\ i=1,2,\ldots,h^i$ is a 
basis of $F^i_0$ for all $i$. 
It is sometimes convenient to fix a basis $\omega_i,\ i=1,2,\ldots,h$ of $V_0$ which is compatible with the filtration $F^\bullet_0$ and such
that the polarization matrix $[\psi_0( \omega_i,\omega_j)]$ is a fixed matrix $\Phi_0$:
$$
[\psi_0( \omega_i,\omega_j)]=\Phi_0.
$$ The matrices $\Psi_0$ 
and $\Phi_0$ are not necessarily the same.
For any $x\in \L$ we define
$$
H^{i,m-i}(x):=F_0^i\cap \overline{F_0^{m-i}}^x
$$\index{$F_0^\bullet$, Hodge filtration}
and the following properties for $x\in \L$:
\begin{enumerate}
\item 
$\psi_\C(x)=\psi_0$;
\item
$V_0=\oplus_{i=0}^m H^{i,m-i}(x)$;
\item
$(-1)^{\frac{m(m-1)}{2}+i}(\sqrt{-1})^{-m}\psi_\C(x)(\omega,\overline \omega^x)>0,\ 
\forall \omega\in H^{i,m-i}(x),\ 
\omega\ne 0$.
\end{enumerate}
Throughout the text we call these properties  P1, P2 and P3.
Fix a polarized lattice $x\in \L$.
P1 implies that 
$$
\psi_0(H^{i,m-i}(x),H^{j,m-j}(x))=0 \hbox{ except for } i+j=m.
$$
This is because  if $i+j>m$ then $\psi_0(F^i_0,F^j_0)=0$ and if
$i+j<m$  then $\psi_0(\overline{F^i_0}^x,\overline{F^j_0}^x)=0$.
We have also $\sum_i H^{i,m-i}(x)  =\oplus_{i} H^{i,m-i}(x)$ if and only if
\begin{equation}
\label{1oct2005}
F^i_0\cap \overline{F^j_0}^x=0,\  \forall\  i+j>m.
\end{equation}
If $a_{m-k,k}+\cdots+a_{0,m}=0, \ a_{i,m-i}\in H^{i,m-i}(x)$ for some $0\leq k\leq m$ with $a_{m-k,k}\not=0$,  then
$$
-a_{m-k,k}=a_{m-k-1,k+1}+\cdots+a_{0,m}\in F^{m-k}_0\cap\overline{F^{k+1}_0}^x
\Rightarrow a_{k,m-k}=0
$$
which is a contradiction. 
The proof in the other direction is a consequence of 
$$
F^i_0\cap \overline{F^j_0}^x=H^{i,m-i}(x)\cap H^{m-j,j}(x),\ i+j>m.
$$
\subsection{Period domain $U$}
\label{Uspace}
Define
$$
\perdo:=\{x\in \L \mid x \hbox{ satisfies P1 }\},
$$
\index{$\perdo$, period domain}
$$
U:=\{x\in\L\mid x \hbox{ satisfies  P1,P2, P3 } \}.
$$
\index{$U$, period type domain}
\begin{proposition}
The set $\perdo$ is an analytic subset of $\L$ and $U$ is an open 
subset of $\perdo$.
\end{proposition}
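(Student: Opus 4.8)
The plan is to treat the two assertions separately: $\perdo$ is cut out by the single holomorphic (indeed algebraic) equation $\psi_\C(x)=\psi_0$, whereas P2 and P3 are manifestly open once unwound. For the first part I would fix once and for all a basis of $V_0$, identifying the space of bilinear forms on $V_0$ with $\Mat(h\times h,\C)$, and consider the map $\Psi\colon\L\to\Mat(h\times h,\C)$ sending $x$ to the matrix $M(x)$ of $\psi_\C(x)$ in this fixed basis. In a local chart around $x_0\in\L$ given by a $\Z$-basis $\delta$ of $V_\Z(x_0)$, a nearby $x$ is recorded by the matrix $g\in\GL(h,\C)$ with $\delta(x)=g\,\delta$; since $\psi_\C(x)$ has matrix $\Psi_0$ in the basis $\delta(x)$, its matrix $M(x)$ in the fixed basis of $V_0$ is a rational, hence holomorphic, function of the entries of $g$. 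Thus $\Psi$ is holomorphic and $\perdo=\Psi^{-1}(M_0)$, the preimage of the single point $M_0$ given by the matrix of $\psi_0$, is an analytic subset of $\L$.

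Since openness is local, to prove $U$ open in $\perdo$ it suffices to show that every $x_0\in U$ has a neighborhood in $\perdo$ on which P2 and P3 persist (if $U=\emptyset$ there is nothing to prove). The governing observation is that the conjugation $\ovl{\cdot}^x$, hence each subspace $\ovl{F_0^j}^x$, depends only real-analytically on $x$: applying $\ovl{\cdot}^x$ to a fixed basis of $F_0^j$ yields vectors whose coordinates vary continuously with $g$. So I argue openness through continuity, never through analyticity. For P2 I recast the condition $V_0=\oplus_i H^{i,m-i}(x)$ as the statement that the decreasing filtrations $F_0^\bullet$ and $\ovl{F_0^\bullet}^x$ are $m$-opposite, which by the elementary lemma on opposite filtrations is equivalent to the transversality relations $F_0^p\oplus\ovl{F_0^{m-p+1}}^x=V_0$ for all $p$ (the implication from oppositeness is immediate; the converse holds because these relations make $\sum_i H^{i,m-i}(x)$ direct and force $\sum_i\dim H^{i,m-i}(x)=h$). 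At $x_0\in U$ these hold, pinning down the complementary, locally constant dimensions $\dim F_0^p=h^p$ and $\dim\ovl{F_0^{m-p+1}}^x=h^{m-p+1}$ with $h^p+h^{m-p+1}=h$. Each relation then asserts that a fixed basis of $F_0^p$ together with a continuously varying basis of $\ovl{F_0^{m-p+1}}^x$ assembles into an invertible $h\times h$ matrix, a nonvanishing-determinant, hence open, condition; intersecting over the finitely many $p$ gives a neighborhood of $x_0$ on which P2 holds.

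For P3, note that on $\perdo$ the form $\psi_\C(x)=\psi_0$ is constant, so P3 asserts the positive-definiteness, for each $i$, of the Hermitian form $\omega\mapsto(-1)^{\frac{m(m-1)}{2}+i}(\sqrt{-1})^{-m}\psi_0(\omega,\ovl\omega^x)$ on $H^{i,m-i}(x)$. On the P2-neighborhood just found each $H^{i,m-i}(x)$ is a continuously varying subspace of constant dimension, so I may select a continuously varying basis and form the Gram matrix of this Hermitian form; it varies continuously in $x$ and is positive-definite at $x_0$, hence positive-definite on a neighborhood. Intersecting the P2- and P3-neighborhoods completes the proof that $U$ is open in $\perdo$.

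The main obstacle lies entirely in the P2 step: because $x\mapsto\ovl{F_0^j}^x$ is merely real-analytic, the locus where P2 holds is not an analytic subset and cannot be handled by the preimage argument used for $\perdo$; it must be reformulated as the finite family of nonvanishing-determinant conditions, which rests on the opposite-filtrations lemma and on the bookkeeping showing the relevant dimensions are complementary and locally constant. By comparison, the analyticity of $\perdo$ and the P3 openness are routine once this reformulation is in place.
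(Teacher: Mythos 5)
Your proof is correct and takes essentially the same route as the paper: analyticity of $\perdo$ follows from the holomorphic dependence of the matrix of $\psi_\C(x)$ on the chart coordinates of $\L$, and openness of $U$ follows from the continuous (real-analytic) dependence of the conjugation $x\mapsto\overline{\,\cdot\,}^x$ together with the reduction of P2 and P3 to finitely many open linear-algebra conditions. The only difference is in how P2 is packaged: you use the opposed-filtration determinant conditions $F_0^p\oplus\overline{F_0^{m-p+1}}^x=V_0$, whereas the paper uses the equivalent characterization via the trivial intersections $F_0^i\cap\overline{F_0^j}^x=0$ for $i+j>m$ together with the dimension conditions $\dim H^{i,m-i}(x)=h^{i,m-i}$, which are open by semicontinuity of intersection dimensions.
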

\begin{proof}
Take a basis $\omega_i,\ i=1,2,\ldots,h$ of $V_0$ compatible with the 
Hodge filtration. The property (\ref{17oct2011}) is given by
$$
\psi_\C(x)(\omega_r,\omega_s)=0,\ r\leq h^i,\ s\leq h^j,\ i+j>m
$$
and so $\perdo$ is an analytic subset of $\L$.

Now choose a basis $\delta$ of $V_\Z(x)$ and write 
$\delta=p\omega$. Using $\omega$ we may assume that $V_0=\C^h$
and $\delta$ is constituted by the rows of $p$. We have
$$
\omega=p^{-1}\delta\Longrightarrow \overline{\omega}^x=\overline{p}^{-1}\delta=
\overline{p}^{-1}p\omega
$$
Therefore, the rows of $\overline{p}^{-1}p$ are complex conjugates of  
the entries of $\omega$. Now it is easy to verify that if 
the property (\ref{1oct2005}), $\dim(H^{i,m-i}(x))=h^{i,m-i}$ and P3 are valid
for one $x$ then they are valid for all points in a small neighborhood of $x$
(for P3 we may first restrict $\psi_0$ to the product of sphere of radius $1$ and 
center $0\in\C^h$).
\end{proof}
\subsection{An algebraic group}
\label{20sept2010}
 Let $G_0$ be the
algebraic group
$$
G_0:={\rm Aut}(F_0^\bullet, \psi_0):=
$$
$$
\{g: V_0\to V_0 \text{ linear }  \mid g(F_0^i)=F_0^i,\ 
\psi_0(g(\omega_1),g(\omega_2))=\psi_0(\omega_1,\omega_2), \omega_1,\omega_2\in V_0 \}.
$$ 
It acts from the right on $\L$ in a canonical way: 
$$
xg:=g^{-1}(x),\ \psi_\Z(xg)(\cdot,\cdot):=\psi_\Z(g(\cdot),g(\cdot)),\  g\in G_0,\ x\in\L.
$$ 
One can easily see that for all $\omega\in V_0$, $x\in\L$ and $g\in G$
we have
$$
\overline{\omega}^{xg}=g^{-1}\overline{g(\omega)}^x.
$$
\begin{proposition}
\label{oct2005}
The properties P1, P2 and P3 are invariant under the action of $G_0$.
\end{proposition}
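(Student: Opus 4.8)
The plan is to reduce all three statements to a single transformation rule for the Hodge pieces, namely the identity
$$
H^{i,m-i}(xg)=g^{-1}\bigl(H^{i,m-i}(x)\bigr),\quad i=0,1,\ldots,m,
$$
after which P1, P2 and P3 follow almost mechanically. First I would dispose of P1: since the action on the polarization is $\psi_\C(xg)(\cdot,\cdot)=\psi_\C(x)(g(\cdot),g(\cdot))$ and $g\in G_0$ preserves $\psi_0$, the hypothesis $\psi_\C(x)=\psi_0$ gives at once $\psi_\C(xg)(\omega_1,\omega_2)=\psi_0(g(\omega_1),g(\omega_2))=\psi_0(\omega_1,\omega_2)$, so that $\psi_\C(xg)=\psi_0$.

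The core step is establishing the transformation rule. Starting from $H^{i,m-i}(xg)=F_0^i\cap\overline{F_0^{m-i}}^{xg}$, I would use the conjugation formula $\overline{\omega}^{xg}=g^{-1}\overline{g(\omega)}^x$ together with $g(F_0^{m-i})=F_0^{m-i}$ to compute
$$
\overline{F_0^{m-i}}^{xg}=g^{-1}\overline{g(F_0^{m-i})}^x=g^{-1}\overline{F_0^{m-i}}^x.
$$
Combining this with $F_0^i=g^{-1}(F_0^i)$ and the fact that the invertible linear map $g^{-1}$ commutes with intersections yields
$$
H^{i,m-i}(xg)=g^{-1}(F_0^i)\cap g^{-1}\bigl(\overline{F_0^{m-i}}^x\bigr)=g^{-1}\bigl(F_0^i\cap\overline{F_0^{m-i}}^x\bigr)=g^{-1}\bigl(H^{i,m-i}(x)\bigr).
$$
Property P2 is then immediate: applying the isomorphism $g^{-1}$ to $V_0=\oplus_{i=0}^m H^{i,m-i}(x)$ gives $V_0=\oplus_{i=0}^m g^{-1}(H^{i,m-i}(x))=\oplus_{i=0}^m H^{i,m-i}(xg)$.

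For P3, given $\omega\in H^{i,m-i}(xg)$ with $\omega\neq 0$, I would set $\eta:=g(\omega)$, which lies in $H^{i,m-i}(x)$ and is nonzero since $g$ is invertible. Using the definition of the action on $\psi$ and the conjugation formula once more, $g(\overline{\omega}^{xg})=\overline{g(\omega)}^x=\overline{\eta}^x$, whence
$$
\psi_\C(xg)(\omega,\overline{\omega}^{xg})=\psi_\C(x)\bigl(g(\omega),g(\overline{\omega}^{xg})\bigr)=\psi_\C(x)(\eta,\overline{\eta}^x).
$$
Multiplying by the sign $(-1)^{\frac{m(m-1)}{2}+i}(\sqrt{-1})^{-m}$ and invoking P3 for $x$ gives the desired positivity for $xg$. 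The only genuinely delicate point is the bookkeeping in the core step, namely tracking how conjugation with respect to $x$ interacts with the action $xg=g^{-1}(x)$ and confirming that $g^{-1}$ may legitimately be pulled out of the intersection; once the conjugation formula is applied correctly, the remainder is formal.
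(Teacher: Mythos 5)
Your proof is correct and follows essentially the same route as the paper's: both reduce P2 and P3 to the identity $H^{i,m-i}(xg)=g^{-1}\bigl(H^{i,m-i}(x)\bigr)$, derived from the conjugation formula $\overline{\omega}^{xg}=g^{-1}\overline{g(\omega)}^x$, and both handle P3 via the computation $\psi_\C(xg)(\omega,\overline{\omega}^{xg})=\psi_\C(x)\bigl(g(\omega),\overline{g(\omega)}^x\bigr)$. Your write-up merely makes explicit the bookkeeping (P1 from preservation of $\psi_0$, P2 from $g^{-1}$ being an isomorphism) that the paper leaves as "these equalities prove the proposition."
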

\begin{proof}
The property $P1$ for $xg$ follows from the definition.
Let $x\in\L,\ g\in G_0$ and $\omega\in V_0$. We have
\begin{eqnarray*}
H^{i,m-i}(xg) & = &  F^i_0\cap \ovl{F^{m-i}_0}^{xg}=
F^i_0\cap g^{-1}\ovl{g(F^{m-i}_0)}^x= F^i_0\cap g^{-1}(\ovl{F^{m-i}_0}^x) \\
& = & g^{-1}(F^i_0\cap \ovl{F^{m-i}_0}^x)=
g^{-1}(H^{i,m-i}(x))
\end{eqnarray*}
and
$$
\psi_\C(xg)(\omega,\ovl{\omega}^{xg})=\psi_\C(x)(g(\omega),gg^{-1}
\ovl{g(\omega)}^{x})=\psi_\C(x)(g(\omega),\ovl{g(\omega)}^{x}).
$$
These equalities prove the proposition.
\end{proof}
The above proposition implies that $G_0$ acts from the right on $U$. We fix a basis $\omega_i, i=1,2,\ldots,h,$ of $V_0$
compatible with the Hodge filtration $F_0^\bullet$ and, if there is no danger of confusion, we identify each $g\in G_0$ with the $h\times h$ matrix $\tilde g$ given by
\begin{equation}
\label{erica}
[g^{-1}(\omega_1),g^{-1}(\omega_2),\ldots, g^{-1}(\omega_h)]=[\omega_1,\omega_2,\ldots,\omega_h]\tilde g.
\end{equation}
\subsection{Griffiths period domain}
\label{griffithdomain}
In this section we give the classical approach to the moduli of polarized Hodge structures due
to P. Griffiths. The reader is referred to \cite{kaus04, kaus00} for more developments in this direction. 
 
Let us fix the $\C$-vector space $V_0$ and the Hodge numbers as in \S  \ref{hodgefilsection}.
Let  also $\sfl$\index{$\sfl$, space of filtrations in $V_0$} 
be the space of filtrations (\ref{hodgefiltration}) in $V_0$. In fact, $\sfl$ has a natural
structure of a compact smooth projective variety. We fix the polarized lattice $x_0\in\L$ and define the Griffiths domain
$$
D:=\{F^\bullet\in \sfl \mid  (V_\Z(x_0),\psi_\Z(x_0),F^\bullet) \hbox{ is a polarized Hodge structure }\}.
$$
\index{Griffiths domain}\index{$D$, Griffiths domain}
The group 
$$
\Gamma_\Z:={\rm Aut }(V_\Z(x_0),\psi_\Z(x_0))
$$
 acts on $V_0$ from the right in the  usual way and this gives us an
action of $\Gamma_\Z$ on $D$. The space $\Gamma_\Z\backslash D $ is the moduli space of
polarized Hodge structures. 

\begin{proposition}
There is a canonical isomorphism
$$
\beta: U/G_0\stackrel{\sim}{\rightarrow} \Gamma_\Z\backslash D.
$$
\end{proposition}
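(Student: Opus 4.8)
The plan is to build $\beta$ by transporting the varying lattice of a point $x\in U$ onto the fixed lattice $V_\Z(x_0)$, thereby turning the fixed filtration $F_0^\bullet$ into a varying one. Concretely, given $x\in U$, both $(V_\Z(x),\psi_\Z(x))$ and $(V_\Z(x_0),\psi_\Z(x_0))$ are polarized lattices of the same type $\Psi_0$, so there is a $\C$-linear $g\colon V_0\to V_0$ with $g(V_\Z(x))=V_\Z(x_0)$ and $\psi_\Z(x)(u,v)=\psi_\Z(x_0)(g u,g v)$; one gets such a $g$ by sending a $\Z$-basis in which $\psi_\Z(x)$ has matrix $\Psi_0$ to one in which $\psi_\Z(x_0)$ has matrix $\Psi_0$. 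The key observation is that such a $g$ is conjugation-equivariant, $g(\ovl{\omega}^x)=\ovl{g(\omega)}^{x_0}$, because it carries a $\Z$-basis of $V_\Z(x)$ to a $\Z$-basis of $V_\Z(x_0)$. Hence $g(H^{i,m-i}(x))=g(F_0^i)\cap\ovl{g(F_0^{m-i})}^{x_0}$, and I claim $F^\bullet:=g(F_0^\bullet)$ lies in $D$: since $g$ matches the two polarizations and the two conjugations, properties P1, P2, P3 for $x$ translate verbatim into the Hodge--Riemann relations for $(V_\Z(x_0),\psi_\Z(x_0),F^\bullet)$ (the first relation being (\ref{17oct2011}) transported). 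I then set $\beta([x]):=[F^\bullet]=[g(F_0^\bullet)]$.

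Next I would verify that $\beta$ is well defined. Two admissible choices $g,g'$ of the transporting map differ by $g'g^{-1}\in\Aut(V_\Z(x_0),\psi_\Z(x_0))=\Gamma_\Z$, so $g'(F_0^\bullet)$ and $g(F_0^\bullet)$ lie in the same $\Gamma_\Z$-orbit and $[g(F_0^\bullet)]\in\Gamma_\Z\backslash D$ is independent of $g$. For $G_0$-invariance, take $g_1\in G_0$ and recall $xg_1=g_1^{-1}(x)$ with $\psi_\Z(xg_1)(\cdot,\cdot)=\psi_\Z(x)(g_1\cdot,g_1\cdot)$; then $g_1\colon V_\Z(xg_1)\to V_\Z(x)$ is an isometry of polarized lattices, so $g\circ g_1$ is an admissible transporting map for $xg_1$, and $\beta([xg_1])=[g(g_1(F_0^\bullet))]=[g(F_0^\bullet)]=\beta([x])$ because $g_1$ fixes $F_0^\bullet$. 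Thus $\beta$ descends to $U/G_0$, and it depends only on the chosen $x_0$, which is what ``canonical'' means here.

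For injectivity, suppose $\beta([x])=\beta([x'])$ with transporting maps $g,g'$. After replacing $g'$ by $\gamma g'$ for a suitable $\gamma\in\Gamma_\Z$ --- still an isometry onto $V_\Z(x_0)$ --- I may assume $g(F_0^\bullet)=g'(F_0^\bullet)$. Then $g_1:=g^{-1}g'$ satisfies $g_1(V_\Z(x'))=V_\Z(x)$, intertwines the two lattice polarizations, fixes $F_0^\bullet$ (because $g,g'$ now induce the same image filtration), and preserves $\psi_0$ (using P1 for both, namely $\psi_\C(x)=\psi_0=\psi_\C(x')$, together with the polarization intertwining). Hence $g_1\in G_0$ and $x'=xg_1$, so $[x]=[x']$ in $U/G_0$.

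Surjectivity is where the real work lies. Given $F^\bullet\in D$, I want a transporting map in reverse: a linear $g\colon V_0\to V_0$ with $g(F_0^\bullet)=F^\bullet$ and $\psi_\C(x_0)(g u,g v)=\psi_0(u,v)$; then $x:=g^{-1}(x_0)$, with $\psi_\Z(x):=\psi_\Z(x_0)(g\cdot,g\cdot)$, is a point of $\L$ with $\psi_\C(x)=\psi_0$, and running the construction of $\beta$ backwards shows $x\in U$ and $\beta([x])=[F^\bullet]$. The existence of this $g$ is the main obstacle, and it is a purely linear-algebraic statement: the forms $\psi_0$ and $\psi_\C(x_0)$ are nondegenerate of the same parity, the filtrations $F_0^\bullet,F^\bullet$ have the same dimensions $h^i$, and both satisfy the first Hodge--Riemann relation $\psi(F^i,F^j)=0$ for $i+j>m$ --- relation (\ref{17oct2011}) for $F_0^\bullet$ and the polarized-Hodge-structure hypothesis for $F^\bullet$. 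I would first equate the two forms by a change of basis, since any two nondegenerate symmetric (resp. skew) forms over $\C$ are equivalent, and then invoke transitivity of the isometry group of a single form on the variety of isotropic flags of fixed type, i.e. Witt's theorem. Composing the two maps yields the desired $g$ and completes the proof.
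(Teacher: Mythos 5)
Your proposal is correct and follows essentially the same route as the paper: transport the fixed filtration $F_0^\bullet$ along a lattice isometry onto $(V_\Z(x_0),\psi_\Z(x_0))$, observe that the $\Gamma_\Z$-ambiguity in that choice and the $G_0$-action account exactly for the fibers, and obtain surjectivity by pulling a given $F^\bullet\in D$ back to $F_0^\bullet$ via a form-compatible linear map. The only real difference is completeness: the paper compresses surjectivity into a parenthetical (whose ``$F^\bullet=g(F_0^\bullet)$ for some $g\in G_0$'' cannot be meant literally, since $G_0$ fixes $F_0^\bullet$, and which tacitly identifies $\psi_\C(x_0)$ with $\psi_0$), whereas you supply precisely the missing linear algebra --- equivalence of nondegenerate forms of the same parity together with Witt-type transitivity on the relevant self-dual flags, noting that the flag members $F^j$ with $j>m/2$ are isotropic and the rest are their orthogonal complements.
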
 
\begin{proof}
We take $x\in U$ and an isomorphism 
$$\gamma: (V_\Z(x),\psi_\Z(x))
\stackrel{\sim}{\rightarrow} 
(V_\Z(x_0),\psi_\Z(x_0)).
$$
 The pushforward of the Hodge filtration $F^\bullet_0$ under this
isomorphism gives us a Hodge filtration on $V_0$  with respect to 
the lattice $V_\Z(x_0)$ and so it gives us a point 
$\beta(x)\in D$. Different choices of $\gamma$ leads us to the action 
of $\Gamma_\Z$ on $\beta(x)$. Therefore, we have a well-defined map
$$
\beta: U\rightarrow \Gamma_\Z\backslash D.
$$
Since $G_0=\Aut(V_0, F_0^\bullet, \psi_0)$, $\beta$ induces the desired isomorphism (it is surjective because for
any polarized Hodge structure $(V_\Z(x_0),\psi_\Z(x_0),F^\bullet)$ we have $V_\Z(x_0)=V_0,\ \psi_\C(x_0)=\psi_0$ and $F^\bullet=g(F_0^\bullet)$ for some $g\in G_0$).
\end{proof}
The Griffiths domain is the moduli space of polarized Hodge structures 
of a fixed type and with a $\Z$-basis in which the polarization has a fixed matrix form.
Our domain $U$ is the moduli space of polarized Hodge structures of a fixed type
and with a $\C$-basis compatible with the Hodge filtration and for which the polarization has a fixed matrix form.


\section{Period map}
In this section we introduce Poincar\'e duals, period matrices and Gauss-Manin connections in the framework of polarized Hodge structures.

\subsection{Poincar\'e dual}
\label{poincaredual}
In this section we explain the notion of Poincar\'e dual.
\index{Poincar\'e dual}
Let $(V_\Z(x),\psi_\Z(x))$ be a polarized lattice and 
$\delta\in V_\Z(x)^\vee$, where $\vee$ means
the dual of a $\Z$-module. We will use the symbolic integral notation
\index{$\int_{\delta}\omega$, integral of $\omega$ on $\delta$}
$$
\int_{\delta}\omega:=\delta(\omega),\ \forall \omega\in V_0.
$$ 
The equality
\begin{equation}
\label{29dec08}
\int_{\delta}\overline{\omega}^x=\overline{\int_{\delta}\omega},\ \forall \omega\in V_0,\ \delta\in V_\Z(x)^\vee
\end{equation}
follows directly from the definition. The Poincar\'e dual of $\delta\in V_\Z(x)^\vee$ is an element $\delta^\podu\in V_\Z(x)$ with 
the property  
$$
\int_{\delta} \omega=\psi_\Z(x)(\omega, \delta^\podu),\ \forall \omega\in 
V_\Z(x).
$$
It exists and is unique  because $\psi_\Z$ is non-degenerate. 
Using the Poincar\'e duality one defines the dual polarization \index{dual polarization}
$$
\psi_\Z(x)^\vee(\delta_i,\delta_j):=\psi_\Z(x)(\delta_i^\podu,\delta_j^\podu),\ \delta_i,
\delta_j\in V_\Z(x)^\vee.
$$
We have
$$
(A^\vee \delta)^{\podu}=A^{-1}\delta^{\podu}, \ \forall A\in \Gamma_\Z,\ \delta\in V_\Z(x_0)^\vee, 
$$
where $A^\vee:  V_\Z(x_0)^\vee\rightarrow  V_\Z(x_0)^\vee$ is the induced dual map.  This follows from:
$$
\int_{A^{\vee}\delta}\omega=\int_{\delta}A\omega=\psi_\Z(x_0)(A\omega, \delta^\podu)
=\psi_\Z(x_0)(\omega, A^{-1}\delta^\podu),\ \forall\omega\in V_0.
$$ 
We define
$$
\Gamma_\Z^\vee:=\Aut(V_\Z(x_0)^\vee, \psi_\Z(x_0)^\vee).
$$
It follows that 
$\Gamma_\Z\rightarrow \Gamma_\Z^\vee,\ A\mapsto A^\vee$ is an isomorphism of groups.
\subsection{Period matrix}
\label{22oct04}
Let $\omega_i, i=1,2,\ldots,h$ be a
$\C$-basis of $V_0$ compatible with $F_0^ \bullet$. Recall that $\omega$ means the $h\times 1$ matrix with entries $\omega_i$.
For $x\in U$, we take a $\Z$-basis $\delta_{i},\ i=1,2,\ldots,h$ of $V_\Z(x)^\vee$ such that
the matrix of $\psi_\Z(x)^\vee$ in the basis $\delta$ is $\Psi_0$. 
We define the abstract period matrix/period map
in the following way:
$$
\per=\per (x)=[\int_{\delta_i}\omega_j]_{h\times h} :=\begin{pmatrix}
\int_{\delta_1} \omega_1 & \int_{\delta_1} \omega_2  
 & \cdots &  \int_{\delta_1} \omega_h  \\
\int_{\delta_2} \omega_1 & \int_{\delta_2} \omega_2 
   & \cdots &  \int_{\delta_2} \omega_h  \\
\vdots  & \vdots  & \vdots  & \vdots  \\ 
\int_{\delta_h} \omega_1 & \int_{\delta_h} \omega_2  
  & \cdots &  \int_{\delta_h} \omega_h
\end{pmatrix}.
$$
Instead of the period matrix it is useful to use the matrix 
$$
\perr=\perr(x),\ \hbox{ where } \delta^\podu=\perr \omega.
$$\index{$\perr$, period type matrix}
Then we have
$$
\Psi_0^\tr=\per\cdot \perr^\tr.
$$
If  we identify $V_0$ with $\C^h$ through the basis 
$\omega$ then $q$ is a matrix whose rows are the entries of $\delta$.
We define  $P$ to be the set of period matrices $\per$. 
We write an element $A$ of $\Gamma_\Z$ in a 
basis of $V_\Z(x_0)$, and redefine 
$\Gamma_\Z$:
$$
\Gamma_\Z:=\{A\in \GL(h,\Z)\mid  A\Psi_0A^\tr=\Psi_0\}.
$$
The group $\Gamma_\Z$ acts on $P$ from
the left by the usual multiplication of matrices and
$$
U=\Gamma_\Z\backslash P.
$$
In a similar way, if we identity each element $g$ of $G_0$ with  the matrix $\tilde g$ in (\ref{erica}) then
$G_0$ acts from the right on $P$ by the usual multiplication of matrices.

\subsection{A canonical connection on $\L$} 
\label{connectiononL}
We consider the trivial bundle $\Hb=\L\times V_0$ on $\L$. On $\Hb$ we have a 
well-defined integrable connection
$$
\nabla: \Hb\rightarrow \Omega^1_\L\otimes_{\O_\L} \Hb
$$
such that a section $s$ of $\Hb$ in a small open set 
$V\subset\L$ with the property 
$$
s(x)\in \{x\}\times V_\Z(x),\ x\in V.
$$
is flat. Let $\omega_1,\omega_2,\ldots,\omega_h$ be a basis of $V_0$ compatible with the Hodge filtration $F_0^\bullet$.
We can consider $\omega_i$ as a global section of $\Hb$ and so we have
\begin{equation}
\nabla\omega=A\otimes\omega,\ 
A=\left ( \begin{array}{cccc}    
            \omega_{11}& \omega_{12} & \cdots & \omega_{1h} \\
            \omega_{21}& \omega_{22} & \cdots & \omega_{2h} \\
            \vdots     & \vdots      & \ddots &  \vdots    \\
            \omega_{h1}& \omega_{h2} & \cdots & \omega_{hh} \\
       \end{array} \right ),\ \omega_{ij}\in H^0(\L, \Omega_\L^1).
\end{equation}
\index{$\nabla$, connection}
$A$ is called the connection matrix of $\nabla$ in the basis $\omega$.\index{connection matrix} 
The connection $\nabla$ is integrable and so $dA=A\wedge A$:
\begin{equation}
\label{14oct05}
d\omega_{ij}=\sum_{k=1}^{h} \omega_{ik}\wedge \omega_{kj},\ i,j=1,2,\ldots,h.
\end{equation}
Let $\delta$ be a basis of flat sections. Write $\delta=\perr\omega$.
We have
$$
\omega=\perr^{-1}\delta\Rightarrow
\nabla(\omega)=d(\perr^{-1})\perr \omega
\Rightarrow
$$
$$
A=d\perr^{-1}\cdot\perr=d(\per^\tr\cdot\Psi_0^{-\tr})\cdot(\Psi_0^{\tr}\cdot 
\per^{-\tr})=d(\per^{\tr})\cdot\per^{-\tr}.
$$
and so
\begin{equation}
\label{omidshoot}
A=d(\per^{\tr})\cdot\per^{-\tr}.
\end{equation}
where $\per$ is the abstract period map.
We have used the equality $\Psi_0=\per\cdot \perr^\tr$. Note that the entries of $A$ are holomorphic
$1$-forms on $\L$ and a fundamental system for the linear differential equation 
$dY=A\cdot Y$
in $\L$
is given by $Y=\per^{\tr}$:
$$
d\per^\tr=A\cdot \per^\tr.
$$
We define the Griffiths transversality distribution by:
 \begin{equation}
\F_{gr}:
\label{grtr1}
\omega_{ij}=0,\ i\leq h^{m-x},\ j> h^{m-x-1},\ x=0,1,\ldots,m-2.
\end{equation}
A holomorphic map $f:V\to U$, where $V$ is an analytic variety, is called a period map if it is tangent to the Griffiths transversality distribution, that is, for all $\omega_{ij}$ as in
 (\ref{grtr1}) we have $f^{-1}\omega_{ij}=0$. 
\subsection{Some functions on $\L$}

For two vectors $\omega_1,\omega_2\in V_0$, we have the following holomorphic function on $\L$:
$$
\L\rightarrow \C, \ x\mapsto \psi_\C(x)(\omega_1,\omega_2).
$$
We  choose a basis
$\omega$ of $V_0$ and $\delta$ of $V_\Z(x)^\vee$ for $x\in\L$  and write $\delta^\podu=\perr \cdot \omega$. 
Then 
\begin{equation}
\label{10oct05}
F:=[\psi_\C(x)(\omega_i,\omega_j)]=(\perr^{-1})\Psi_0\perr^{-\tr}= \per^\tr\Psi_0^{-\tr}\per
\end{equation}
(we have used the identity $\Psi_0^\tr=\per\cdot \perr^\tr$).
The matrix $F$
satisfies the differential equation
\begin{equation}
\label{7nov05}
dF=A\cdot F+F\cdot A^\tr,
\end{equation}
where $A$ is the connection matrix.
The proof is a straightforward consequence of (\ref{10oct05}) and (\ref{omidshoot}):
\begin{eqnarray*}
dF &=& d(\per^\tr\Psi_0^{-\tr}\per)\\
&=& (d\per^\tr)\Psi_0^{-\tr}\per+\per^\tr\Psi_0^{-\tr}(d\per)\\
&=& A\cdot F+F\cdot A^\tr
\end{eqnarray*}
It is easy to check that every solution of the differential equation (\ref{7nov05})
is of the form $\per^\tr\cdot C\cdot \per$ for some constant $h\times h$ matrix $C$ with
entries in $\C$ (if $F$ is a solution of (\ref{7nov05}) then $F\cdot \per^{-1}$ is a solution
of $dY=A\cdot Y$). We restrict $F, A$ and $\per$ to $U$ and we conclude that
\begin{equation}
 \label{12sep2010}
\Phi_0=\per^{\tr}\Psi_0^{-\tr}\per
\end{equation}
$$
A\cdot \Phi_0=-\Phi_0\cdot A^\tr.
$$
where by definition $F|_U$ is the constant matrix $\Phi_0$.  

We have a plenty of non-holomorphic functions on $\L$. 
 For two elements 
$\omega_1,\omega_2\in V_0$ we define
$$
\L\rightarrow \C,\ x\mapsto \psi_\C(x)
(\omega_1, 
\overline{\omega_2}^x).
$$
Let $\omega$ and $\delta$ be as before. We write
$\delta^{\podu}=\overline{\perr}\cdot\overline{\omega}^{x}$ and we have
\begin{equation}
\label{10.10.05}
G:=[\psi_\C(x)(\omega_i,\bar \omega_j^x)]=\per^\tr\Psi_0^{-\tr}\overline{\per}=(\perr^{-1})\Psi_0\overline{\perr}^{-\tr}
\end{equation}
The matrix $G$ satisfies the differential equation
\begin{equation}
\label{7no05}
dG=A\cdot G+G\cdot \overline {A}^\tr,
\end{equation}
where $A$ is the connection matrix.


\section{Quasi-modular forms attached to Hodge structures}
In this section we explain what is a quasi-modular form attached to a given fixed data of Hodge structures and a full family
of enhanced projective varieties.

\subsection{Enhanced projective varieties}
\label{batoocarro}
Let $X$ be a complex smooth projective  variety of a fixed topological type. This means that we fix a $C^\infty$ manifold  $X_0$ and assume that $X$ as a $C^\infty$-manifold 
is isomorphic to $X_0$ (we do not fix the isomorphism).  
Let $n$ be the complex dimension of $X$ and let $m$ be an integer with $1\leq m\leq n$. We fix an element  $\theta\in H^{2n-2m}(X,\Z)\cap H^{n-m,n-m}(X)$. By $H^i(X,\Z)$ we mean its image 
in $H^i(X,\C)=H^i_\dR(X)$; therefore, we have killed the torsion. We consider the bilinear map
$$
\langle \cdot,\cdot \rangle_\C : H_\dR^{m}(X)\times H_\dR^{m}(X)\to \C, \ \langle \omega,\alpha\rangle= 
\frac{1}{(2\pi i)^m}\int_X \omega \cup \alpha\cup \theta.
$$
The $(2\pi i)^{-m}$ factor in the above definition ensures us that the bilinear map 
$\langle \cdot,\cdot \rangle_\C$ is defined for the algebraic de Rham cohomology (see for instance Deligne's lecture in \cite{dmos}). 
We
assume that it is non-degenerate. 
The cohomology $H^m_\dR(X)$  
is equipped with the so-called Hodge filtration $F^\bullet$. We assume that the Hodge numbers  $h^{i,m-i},\ i=0,1,2,\ldots,m$ coincide with those fixed in this article. 
We consider Hodge structures with an isomorphism
$$
(H^m_\dR(X),F^\bullet, \langle \cdot,\cdot \rangle_\C)\cong (V_0, F^\bullet_0, \psi_0).
$$ 
From now on, by an enhanced projective variety we mean all the data described in the previous paragraph. 

We also need to introduce families of enhanced projective varieties. Let $V$ be an irreducible affine variety and  $\O_V$ be the ring of regular functions on $V$. By definition $V$ is the underlying complex space of ${\rm Spec(\O_V)}$ and 
$\O_V$ is a finitely generated reduced $\C$-algebra without zero divisors. Also, let $X\to V$ be a family of smooth projective varieties as in the previous paragraph. We will also use the notations  
$\{X_t\}_{t\in V}$ or
$X/V$ to denote $X\to V$.   
The de Rham cohomology $H^m_\dR(X/V)$ and its 
Hodge filtration $F^\bullet H^m_\dR(X/V)$ are $\O_V$-modules (see for instance \cite{gro66}) and in a similar way we have 
$\langle \cdot,\cdot \rangle_{\O_V} : H_\dR^{m}(X/V)\times H_\dR^{m}(X/V)\to \O_V$.
Note that we fix an element $\theta\in F^{n-m}H^{2n-2m}_\dR(X/V)$ and assume that it induces in each fiber $X_t$ an element in $H^{2n-2m}(X_t,\Z)$.
 We say that the family is 
enhanced if we have an isomorphism
\begin{equation}
 \label{15sep2010}
\left (H^m_\dR(X/V),\  F^\bullet H^m_\dR(X/V),\  \langle\cdot,\cdot\rangle_{\O_V}\right ) \cong \left(V_0\otimes_\C {\O_V},\  F_0^\bullet\otimes_\C \O_V, \ \psi_0\otimes_\C
\O_V\right ).
\end{equation}
We fix a basis $\omega_i,\ i=1,2,\ldots,h$ of $V_0$ compatible with the filtration $F_0^\bullet$. Under the above isomorphism we get a basis $\tilde\omega_i, \ i=1,2,\ldots,h$ of the $\O_V$-module 
$H^m_\dR(X/V)$ which is compatible with the Hodge filtration and the bilinear map $\langle\cdot,\cdot\rangle_{\O_V}$ written in this basis is a constant matrix. 
This gives us another formulation of
an enhanced family of projective varieties.
An enhanced family of projective varieties $\{X_t\}_{t\in V}$ is full if we have an algebraic action of $G_0$ (defined in \S\ref{20sept2010}) from the right  on $V$ (and hence on $\O_V$) such 
that it is compatible with the isomorphism 
(\ref{15sep2010}). This is equivalent to saying  that for $X_t$ and $\tilde\omega_i, \ i=1,2,\ldots,h$ as above, we have an isomorphism 
$$
(X_{tg},[\tilde \omega_1,\tilde\omega_2,\ldots,\tilde\omega_h])\cong (X_t,[\tilde \omega_1,\tilde\omega_2,\ldots,\tilde\omega_h]g), \ t\in V, \ g\in G_0,
$$
(recall the matrix form of $g\in G_0$ in (\ref{erica})).
A morphism $Y/W\to X/V$ of two families of enhanced projective varieties is a commutative diagram
$$
\begin{matrix}
Y &\to& X\\
\downarrow & &\downarrow\\
W &\to& V
\end{matrix}
$$
such that 
$$
\begin{matrix}
H^m(X/V)&\to& H^m(Y/W)\\
\downarrow& &\downarrow\\
V_0\otimes_\C \O_V&\to& V_0\otimes_\C \O_W
\end{matrix}
$$
is also commutative. 
\subsection{Period map}
\label{11sep06}
For an enhanced projective variety $X$, we consider the image of $H^m(X,\Z)$ in $H^m(X,\C)\cong H^m_\dR(X)\cong V_0$ and hence  
we obtain a unique point in $U$. Note that by this process we  kill  torsion elements in $H^m(X,\Z)$.   
We fix bases $\omega_i$ and $\tilde\omega_i$ as in \S\ref{batoocarro} and a basis $\delta_i, \ i=1,2,\ldots,h$ of $H_m(X,\Z)=H^m(X,\Z)^\vee$ with $[\langle \delta_i,\delta_j\rangle]=\Psi_0$ and we see that the corresponding 
point in $U:=\Gamma_\Z\backslash P$ is given by the equivalence class of the geometric period matrix $[\int_{\delta_i}\tilde \omega_j]$.

For any family of enhanced projective varieties $\{X_t\}_{t\in V}$ we get
$$
\per: V\to U
$$
which is holomorphic. It satisfies the so-called 
Griffiths transversality, that is, it is tangent to the Griffiths transversality distribution.
 It is called a geometric period map. 
The pullback of the connection $\nabla$ constructed in \S\ref{connectiononL} by the period map $\per$ is 
the Gauss-Manin connection of the family $\{X_t\}_{t\in V}$. If the family is full then the geometric period map commutes with the action of $G_0$:
$$
\per(tg)=\per(t)g,\ g\in G_0, \ t\in V.
$$
\subsection{Quasi-modular forms}
\label{13sep2010}
Let $M$ be the set of enhanced projective varieties with  
the fixed topological data explained in \S\ref{batoocarro}. 
We would like to prove that $M$ is in fact an affine variety. The first step in
developing a quasi-modular form theory attached to enhanced projective varieties is to solve the following conjectures. 
\begin{conjecture}\rm
There is an affine variety $T$ and a full family $X/T$ of enhanced projective varieties which is universal in the following sense: 
for any family of enhanced projective varieties $Y/S$ we have a unique morphism of $Y/S\to X/T$ of enhanced 
projective varieties.
\end{conjecture}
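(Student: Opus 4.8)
The plan is to realize $T$ as a $G_0$-torsor of de Rham frames over a moduli space of the underlying polarized varieties, and to extract both affineness and the universal property from this description. The enhancement rigidifies the problem: since the isomorphism $(H^m_\dR(X),F^\bullet,\langle\cdot,\cdot\rangle_\C)\cong(V_0,F_0^\bullet,\psi_0)$ is part of the data, and two such isomorphisms differ by an element of $G_0=\Aut(V_0,F_0^\bullet,\psi_0)$, an automorphism of an enhanced variety is forced to act as the identity on $H^m_\dR(X)$. Granting that such cohomologically trivial automorphisms are themselves trivial for the varieties at hand, the enhanced moduli problem carries no automorphisms, so that even when the underlying moduli exists only as a Deligne--Mumford stack, the enhancement should rigidify it into an honest scheme $T$.

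First I would produce a moduli space $\gms$ (coarse, or a stack) of the underlying polarized smooth projective varieties of the fixed topological type, via the Hilbert scheme and geometric invariant theory in the tradition of Mumford and Viehweg; this step requires boundedness of the family and separatedness of the functor. Over $\gms$ one has the relative algebraic de Rham cohomology $\Hb:=H^m_\dR(X/\gms)$, a vector bundle equipped with its Hodge filtration $F^\bullet\Hb$, the polarization $\langle\cdot,\cdot\rangle$, and the algebraic Gauss--Manin connection (Grothendieck, Katz--Oda). The enhanced moduli is then the relative isomorphism scheme
$$
T:=\Iso_{\gms}\bigl((V_0,F_0^\bullet,\psi_0)\otimes_\C\O_{\gms},\ (\Hb,F^\bullet\Hb,\langle\cdot,\cdot\rangle)\bigr),
$$
a torsor under $G_0$; the universal full family $X/T$ is the tautological family pulled back along $T\to\gms$ and enhanced by the very frame that $T$ parametrizes, so that fullness is nothing but the right $G_0$-action on frames. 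Composing with the analytic period matrices then recovers the geometric period map $\per:T\to U$ of \S\ref{11sep06}, of which the sought statement is the algebraic refinement.

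Second I would show that $T$ is affine. As a torsor under the affine group $G_0$, the structural map $T\to\gms$ is an affine morphism, so the issue is to upgrade the quasi-projectivity of $\gms$ to affineness of $T$. The mechanism is positivity of Hodge bundles: by the Griffiths--Viehweg theorems the top Hodge bundle $F^m\Hb$, hence $\det F^m\Hb$, is ample on $\gms$, and the $\Gm$-factor of $G_0$ scales the corresponding frame with exactly this ample weight. Trivializing the ample Hodge bundles along $T\to\gms$ deletes the locus at infinity, and one expects $T$ to be affine -- precisely as in the elliptic-curve model, where $T=\{27t_3^2-t_2^3\neq0\}\subset\C^3$ is the complement of the discriminant.

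The universal property then amounts to representability. Given an enhanced family $Y/S$, its enhancement isomorphism (\ref{15sep2010}) furnishes, together with the classifying map $S\to\gms$, a canonical frame of $H^m_\dR(Y/S)$ and hence a lift $S\to T$; this lift is forced by compatibility of frames, yielding the unique morphism $Y/S\to X/T$. I expect the genuine obstacles to be twofold. The first is the existence of $\gms$ in adequate generality: boundedness and properness of automorphisms are hard and are established only for restricted classes (varieties of general type, and certain K3 and Calabi--Yau situations). The second, and presumably the reason the statement is left as a conjecture, is the affineness step, where converting ampleness of the Hodge bundle into global affineness of the frame bundle is delicate and may fail without further hypotheses on the period map.
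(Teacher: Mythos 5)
You should first be aware that the paper contains no proof of this statement: it is posed deliberately as a conjecture, and the text only asserts its truth in two special cases --- elliptic curves, where the universal object is exhibited explicitly as the Weierstrass family over $T=\{(t_1,t_2,t_3)\in\C^3\mid 27t_3^2-t_2^3\neq 0\}$ with the frame $\{\frac{dx}{y},\frac{xdx}{y}\}$, and mirror quintic Calabi--Yau threefolds, where the verification is the content of \cite{ho10*}. In both cases the argument is a direct construction of the universal enhanced family by explicit equations, not an appeal to general moduli machinery. So your proposal must be judged as a free-standing proof attempt, and as such it is a research program rather than a proof: its three pillars are each unproven, and two of them fail in the stated generality.

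Concretely: (i) the existence of the underlying moduli space $\gms$ of polarized smooth projective varieties of a fixed topological type is not known at the level of generality of the conjecture; moreover the polarizing datum here is a Hodge class $\theta\in H^{2n-2m}(X,\Z)\cap H^{n-m,n-m}(X)$, not necessarily an ample divisor class, so the Hilbert-scheme/GIT constructions you invoke do not apply in the form you need. (ii) Your rigidification step assumes that an automorphism acting trivially on $(H^m_\dR(X),F^\bullet,\langle\cdot,\cdot\rangle_\C)$ is the identity. This is false in general: only the single weight-$m$ cohomology group is framed, and already $X=E\times\Pn{1}$ with $n=2$, $m=1$, $\theta=[E\times \{pt\}]$ carries the nontrivial automorphisms $\id_E\times\Aut(\Pn{1})$ acting trivially on $H^1_\dR(X)\cong H^1_\dR(E)$; without rigidity the frame functor is not representable by a variety, the family $X/T$ is not fine, and the \emph{uniqueness} clause of the universal property fails. (iii) The affineness mechanism is incorrect as stated: removing the zero section of an ample line bundle over a projective base yields a quasi-affine but in general non-affine variety (for $\Pn{1}$ and $\O(1)$ one gets $\A^2\setminus\{0\}$, whose ring of functions is $\C[x,y]$), so ampleness of the Hodge bundle plus passage to frames does not by itself produce affineness. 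Since a torsor under the affine group $G_0$ over an affine base is automatically affine, the entire difficulty is that $\gms$ is only quasi-projective; converting Griffiths positivity into affineness of the frame torsor is exactly the open content of the conjecture, not a step one can wave through. Your torsor picture is indeed the heuristic behind the paper's definition of enhanced and full families, but as a proof it establishes none of the claimed existence, affineness, or universality.
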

We would also like to find a universal family which describes the degeneration of projective varieties:
\begin{conjecture}\rm
There is an affine variety $\tilde T\supset T$ of the same dimension as $T$ and  with the following property: 
for any family $f: Y\to S$ of projective varieties with fixed prescribed topological data, but not necessarily 
enhanced and smooth,   and with the discriminant variety $\Delta\subset S$,  the map 
$Y\backslash f^{-1}(\Delta)\to S\backslash \Delta$ is an underlying morphism of an enhanced family, and hence, we have 
the map $S\backslash \Delta\to T$ which extends to $S\to \tilde T$. The conjecture is about the existence of $\tilde T$ with
such an extension property. 
\end{conjecture}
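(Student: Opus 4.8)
The plan is to grant Conjecture 1, so that $T$ carries a universal full family $X/T$ and a finite generating set of $\O_T$; one then takes $\tilde T$ to be the affine closure of $T$ inside $\A^N=\spec(\C[t_1,\ldots,t_N])$, so that $T$ is realized as the complement of a discriminant hypersurface $\Delta_T\subset\tilde T$, exactly as $T\hookrightarrow\tilde T=\C^3$ in the elliptic case of the introduction. Since closure preserves dimension this already gives $\dim\tilde T=\dim T$. With this model fixed, the extension property becomes a removable-singularity statement: for a family $f:Y\to S$ degenerating along $\Delta\subset S$, the composite $S\backslash\Delta\to T\hookrightarrow\tilde T$ is recorded by the $N$ holomorphic functions $t_i\circ\per$ on $S\backslash\Delta$, and it suffices to show that each of these extends holomorphically across $\Delta$, thereby producing the morphism $S\to\tilde T$.

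First I would reduce to a local problem near a single branch of the degeneration. After a semistable reduction and normalization one may assume $\Delta$ is a normal crossing divisor and work on a polydisc in which $\Delta=\{q_1\cdots q_k=0\}$. By Borel's monodromy theorem the local monodromies around the branches of $\Delta$ are quasi-unipotent, so after a finite base change they become unipotent with nilpotent logarithms. By the Riemann extension theorem it then suffices to prove that each $t_i\circ\per$ is locally bounded near $\Delta$; this boundedness is precisely the ``finite growth when a Hodge structure degenerates'' demanded in the introduction.

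The heart of the matter is to establish this boundedness through the theory of degenerating polarized Hodge structures. The geometric period map $\per:S\backslash\Delta\to U=\Gamma_\Z\backslash P$ is horizontal for the Griffiths transversality distribution $\F_{gr}$ of (\ref{grtr1}), so Schmid's nilpotent orbit theorem approximates it by a nilpotent orbit with at most polynomial growth in the variables $\log q_i$. The chosen de Rham frame $\tilde\omega_i$ spans the Hodge bundles $F^\bullet H^m_\dR(X/V)$, and Deligne's canonical extension realizes $(H^m_\dR,\nabla)$ as a bundle with logarithmic connection and regular singularities along $\Delta$, so that the Hodge subbundles, and hence a preferred limiting frame, extend across $\Delta$. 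As in the Eisenstein-series formulas (\ref{eisenstein}), the coordinates $t_i$ are polynomial in the entries of the connection matrix $A$ of (\ref{omidshoot}) and of the bilinear form $F$ of (\ref{10oct05}) computed in the frame $\tilde\omega_i$; since the $G_0$-enhancement is designed to normalize exactly the ambiguity introduced in passing to the limiting frame, the potential $\log q_i$ growth is absorbed and the functions $t_i\circ\per$ remain bounded. Patching the local extensions over $S$ and appealing to $\spec$-functoriality then yields the global algebraic map $S\to\tilde T$.

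The main obstacle is twofold. On the one hand, the coordinates $t_i$ are not canonical: they depend on the universal family furnished by Conjecture 1 and on the chosen $G_0$-enhancement, so one must verify that the specific algebra of functions defining $T\hookrightarrow\tilde T$ genuinely consists of functions of finite growth, and not merely of some abstract algebra of automorphic forms. On the other hand, when the Griffiths domain $D$ is not Hermitian symmetric there is no Baily--Borel theorem guaranteeing that the ring of finite-growth functions is finitely generated or separates points, so the existence of an affine $\tilde T$ with boundary of pure codimension one must be extracted directly from the $G_0$-geometry and the canonical extension rather than imported from classical automorphic theory. Ensuring that the boundary $\tilde T\backslash T$ matches the actual degeneration locus of geometric families --- so that the extended map $S\to\tilde T$ carries $\Delta$ to the correct locus and produces no spurious data --- is where I expect the decisive difficulty to lie.
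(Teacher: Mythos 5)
This statement is a \emph{conjecture} in the paper, not a theorem: the paper gives no proof of it. It only records that the conjecture holds in two cases --- elliptic curves, where $T=\spec\left(\C[t_1,t_2,t_3,\frac{1}{27t_3^2-t_2^3}]\right)$ sits inside $\tilde T=\C^3$, and mirror quintic Calabi--Yau threefolds (\cite{ho10*}) --- and in both cases the verification is by explicit computation of the coordinate ring and of $q$-expansions, not by a general Hodge-theoretic argument. So there is no proof in the paper against which to measure your attempt; what can be assessed is whether your sketch would settle the conjecture, and it would not.

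The decisive gap is the step where you assert that ``the potential $\log q_i$ growth is absorbed'' by the $G_0$-enhancement, so that the pullbacks $t_i\circ\per$ are bounded. Schmid's nilpotent orbit theorem and Deligne's canonical extension give polynomial growth in the $\log q_i$ for period data in a canonical frame; they do not give boundedness of an arbitrary generating set of $\O_T$, and boundedness is in fact false for generic generators: in the elliptic case $\frac{1}{27t_3^2-t_2^3}$ is a perfectly good element of $\O_T$ and blows up at every degeneration, so the closure of $T$ in an affine space whose coordinates include it has no extension property. Your construction of $\tilde T$ as ``the affine closure of $T$ inside $\A^N$'' therefore depends entirely on the generators being exactly the finite-growth functions --- but exhibiting a finitely generated subalgebra of $\O_T$ of the same dimension, consisting of functions that extend across all geometric degenerations and whose $\spec$ contains $T$ as an open subvariety, \emph{is} the content of the conjecture. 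You acknowledge this circularity yourself in the closing paragraph (``the coordinates $t_i$ are not canonical\ldots''), which is honest, but it means the proposal is a restatement of the difficulty rather than a proof. A smaller omitted step: after the finite base change that makes the monodromy unipotent, the extension is produced on a cover of $S$, and one must still descend it to $S$ itself; boundedness together with invariance on the complement of $\Delta$ handles this, but it needs to be said.
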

Similar to Shimura varieties, we expect that $T$ and $\tilde T$ are affine varieties defined over $\bar\Q$. 
Both conjectures are true in the case of elliptic curves (see the discussion in the Introduction). In this case, the function ring of $T$ (resp. $\tilde T$)
 is  
$\C[t_1,t_2,t_3,\frac{1}{27t_3^2-t_2^3}]$ (resp.  $\C[t_1,t_2,t_3]$ ). We have also verified the conjectures for a particular class of Calabi-Yau varieties (see \S\ref{1111} and \cite{ho10*}).

Now, consider the case in which both conjectures are true. We are going to explain the rough idea of the algebra of quasi-modular  forms attached to all fixed data that we had. It
 is the pullback of 
the $\C$-algebra of regular functions in $\tilde T$ by the composition
\begin{equation}
\label{khoshgel??}
\uhp \stackrel{i}{\hookrightarrow} P|_{\Im(\per)}\to U\mid_{\Im(\per)}\stackrel{\per^{-1}}{\to} T\hookrightarrow\tilde T.
\end{equation}
Here $\per$ is the geometric period map.
We need that the period map is 
locally injective (local Torelli problem) and hence $\per^{-1}$ is a local inverse map. The set $\uhp$ is a subset of the set
of period matrices $P$ and it will play the role of the Poincar\'e upper half-plane. If the Griffiths period domain $D$ is  Hermitian symmetric then it is biholomorphic to $D$ 
(see \ref{h10=h01}); however, in other cases it depends on the universal period map $T\to U$ and its dimension is the dimension of the deformation space of the projective 
variety. In this case we do not need
to define $\uhp$ explicitly (see \ref{1111}).  
More details of this discussion will be explained by two examples of the next section. 
\section{Examples}
In this section we discuss two examples of Hodge structures and the corresponding quasi-modular form algebras: 
those attached to  mirror quintic Calabi-Yau varieties and principally 
polarized Abelian varieties. The details of the first case are done in \cite{ho10*, ho11} and we will sketch the results which are related to 
the main thread of the present text. For the second
case there is much work that has been done  and I only sketch some ideas. Much of the work for K3 surfaces endowed with polarizations 
has  been already done by many authors, see \cite{do10-1} and the references therein. The generalization of such  results 
to Siegel quasi-modular forms is work for the future.    
\subsection{Siegel quasi-modular forms} 
\label{h10=h01}
We consider the case in which the weight $m$ is equal to $1$
and the polarization matrix is:
$$
\Psi_0=\mat 0{-I_g}{I_g}0,
$$ 
where $I_g$ is the $g\times g$ identity matrix. 
In this case $g:=h^{10}=h^{01}$ and $h=2g$. 
We take a basis $\omega_i,\ i=1,2,\ldots,2g,$
of $V_0$
compatible with $F^\bullet_0$, that is,  the first $g$ elements form a basis of $F^1_0$. We further assume that 
the polarization $\psi_0: V_0\times V_0\to \C$ in the basis $\omega$ has the form $\Phi_0:=\Psi_0$. 
Because of the particular format of $\Psi_0$,  both these assumptions do not contradict each other.  
We take
a basis $\delta$ of $V_\Z(x)^\vee$ such that the intersection form in this basis is of the form $\Psi_0$ and we write the associated period matrix 
in the form
$$
[\int_{\delta_i}\omega_j]=\mat {x_1}{x_2}{x_3}{x_4},
$$ 
where $x_i,i=1,\ldots,4,$ are
$g\times g$ matrices. 
Since 
$\Psi_0^{-\tr}=\Psi_0$, 
we have 
\begin{eqnarray*}
\mat 0{-I_g}{I_g}0 &=&
\mat {x_1^\tr}{x_3^\tr}{x_2^\tr}{x_4^\tr}
\mat 0{-I_g}{I_g}0 
\mat {x_1}{x_2}{x_3}{x_4} \\
&=&
\mat
{x_3^\tr x_1-x_1^\tr x_3}
{x_3^\tr x_2-x_1^\tr x_4}
{x_4^\tr x_1-x_2^\tr x_3}
{x_4^\tr x_2-x_2^\tr x_4}
\end{eqnarray*}
and 
\begin{eqnarray*}
[\langle \omega_i,\bar\omega_j^x\rangle ] &=&
\mat {x_1^\tr}{x_3^\tr}{x_2^\tr}{x_4^\tr}\mat 0{-I_g}{I_g}0 
\mat {\bar x_1}{\bar x_2}{\bar x_3}{\bar x_4}
\\
&=&
\mat
{x_3^\tr \bar x_1-x_1^\tr \bar x_3}
{x_3^\tr \bar x_2-x_1^\tr \bar x_4}
{x_4^\tr \bar x_1-x_2^\tr \bar x_3}
{x_4^\tr \bar x_2-x_2^\tr \bar x_4}.
\end{eqnarray*}
The properties P1, P2 and  P3 are summarized in the properties 
$$
x_3^\tr x_1=x_1^\tr x_3,\ x_3^\tr x_2-x_1^\tr x_4=-I_g,
$$
$$
x_1,x_2\in \GL (g,\C),
$$
$$
\sqrt{-1}(x_3^\tr \bar x_1-x_1^\tr \bar x_3)\text{ is a 
positive matrix}.
$$
 By definition $P$ is the set of all $2g\times 2g$ matrices $\mat {x_1}{x_2}{x_3}{x_4}$ satisfying the above properties:
The matrix $x:=x_1x_3^{-1}$ is well-defined  and invertible and
satisfies the well-known Riemann relations:\index{Riemann relations}
$$
x^\tr=x,\ \Im(x) \hbox{ is a positive matrix}.
$$
The set of matrices $x\in \Mat^{g\times g}(\C)$ with the above properties is
called the Siegel upper half-space and is denoted by $\uhp$. \index{Siegel upper half-plane}
\index{$\uhp_g$, Siegel upper half-plane of dimension $g$}
We have $U=\Gamma_\Z\backslash P$, where 
$$
\Gamma_\Z={\mathrm Sp}(2g, \Z)=
\{\mat abcd\in \GL (2g,\Z) \mid ab^\tr =ba^\tr ,\ cd^\tr=dc^\tr,\ ad^\tr-bc^\tr=I_g \}.
$$
We have also
$$
G_0=\{\mat{k}{k'}{0}{k^{-\tr}}\in \GL(2g,\C)\mid k{k'}^\tr=k'k^\tr\}
$$
which acts on $P$ from the right. The group ${\mathrm Sp}(2g, \Z)$ acts on $\uhp$ by
$$
\mat abcd\cdot  x=(ax+b)(cx+d)^{-1},\ \mat abcd \in {\mathrm Sp}(2g, \Z),\ x\in\uhp
$$ 
and we have the isomorphism
$$
U/G_0 \rightarrow {\mathrm Sp}(2g, \Z)\backslash \uhp, 
$$
given by 
$$
\mat{x_1}{x_2}{x_3}{x_4}\rightarrow x_1x_3^{-1}.
$$
To each point $x$ of  $P$ we associate a triple $(A_x,\theta_x,\alpha_x)$ as follows: We have $A_x:=\C^{g}/\Lambda_x$, where $\Lambda_x$
is the  $\Z$-submodule of $\C^g$ generated by the rows of $x_1$ and $x_3$. We have cycles $\delta_i\in H_1(A_x,\Z),\ i=1,2,\ldots,2g,$ which are defined by the property
$[\int_{\delta_i}dz_{j}]=\matt {x_1}{x_3}$, where $z_j,\ j=1,2,\ldots, g,$ are linear coordinates of $\C^g$.  There is a basis 
$\alpha_x=\{\alpha_1,\alpha_2,\ldots,\alpha_{2g}\}$ of $H_\dR^1(A_x)$ such that
$$
[\int_{\delta_i}\alpha_j]=\mat{x_1}{x_2}{x_3}{x_4}.
$$
The polarization in $H_1(A_x,\Z)\cong \Lambda_x$ (which is defined by  $[\langle \delta_i,\delta_j\rangle ]=\Psi_0$) is an element $\theta_x\in H^2(A_x,\Z)=\bigwedge_{i=1}^2{\rm Hom}(\Lambda_x, \Z)$. 
It gives the following bilinear map 
$$
\langle\cdot,\cdot\rangle : H_\dR^1(A_x)\times H_\dR^1(A_x)\to\C,\ \langle \alpha,\beta \rangle=\frac{1}{2\pi i}\int_{A_x} \alpha\cup \beta \cup \theta_x^{g-1}
$$
which satisfies $[\langle \alpha_i,\alpha_j\rangle]=\Psi_0$.

The triple $(A_x,\theta_x, \alpha_x)$ that we constructed in the previous paragraph does not depend on the action of ${\mathrm Sp}(2g, \Z)$ from the left on $P$; therefore, for each $x\in U$ 
we have constructed such a triple. In fact $U$ is the moduli space of the triples $(A,\theta, \alpha)$ such that $A$ is a principally polarized abelian variety with a polarization $\theta$ and 
$\alpha$  is a basis of $H_\dR^1(A)$ compatible with the Hodge filtration $F^1\subset F^0= H_\dR^1(A)$  and such that $[\langle \alpha_i,\alpha_j\rangle]=\Psi_0$.

We constructed the moduli space $U$ in the framework of complex geometry. In order to introduce Siegel quasi-modular forms, we have 
to study the same moduli space in the framework of algebraic geometry. We have to construct an algebraic  variety $T$ over $\C$ such that the points 
of $T$ are in one to one correspondence 
with the equivalence classes of the triples $(A,\theta,\alpha)$. We also expect that $T$ is an affine variety and it lies inside another affine variety $\tilde T$ which describes the degeneration
of varieties (as it is explained in \S \ref{13sep2010}). 
The pullback of the $\C$-algebra of regular functions on $\tilde T$ through the composition
$$
\uhp\to P\to U\stackrel{\per^{-1}}{\to} T\hookrightarrow \tilde T
$$
is, by definition, the  $\C$-algebra of Siegel quasi-modular forms.
The first map is given by 
$$
z \to \mat {z}{-I_g}{I_g}{0}
$$ 
and the second is the canonical map. The period map in this case is a biholomorphism.
If we impose a functional property for $f$ regarding the action of $G_0$ then
this will be translated into a functional property of a Siegel quasi-modular form with respect to the action of ${\mathrm Sp}(2g, \Z)$. In this way we can even define a 
Siegel quasi-modular form defined over $\bar\Q$ (recall that 
we expect $\tilde T$ to be defined over $\bar\Q$).  
It is left to the reader to verify that the $\C$-algebra of 
Siegel quasi-modular forms 
is closed under derivations with respect to $z_{ij}$ with $z=[z_{ij}]\in\uhp$.
For the realization of all these in the case of 
elliptic curves, $g=1$, see the Introduction and \cite{ho06-2}. See the books  \cite{kl90,fr83, ma71} for more information on Siegel modular 
forms.
\subsection{Hodge numbers, 1,1,1,1}
\label{1111}
In this section we consider the case $m=3$ and the Hodge numbers
$h^{30}=h^{21}=h^{12}=h^{03}=1,\ h=4.$
The polarization matrix written in an integral basis is given by
$$
\Psi_0=\begin{pmatrix}
 0&0&1&0\\
0&0&0&1\\
-1&0&0&0\\
0&-1&0&0
\end{pmatrix}.
$$
Let us fix a basis $\omega_1,\omega_2,\omega_3,\omega_4$ of $V_0$ compatible with the Hodge filtration $F_0^\bullet$, a basis $\delta_1,\delta_2,\delta_3,\delta_4\in V_\Z(x)^\vee$ with the 
intersection matrix $\Psi_0$ and let us write  the period matrix in the form $\per(x)=[x_{ij}]_{i,j=1,2,\ldots,4}$. 
We assume that  the polarization $\psi_0$ in the basis $\omega_i$ is given by the matrix
$$
\Phi_0:=
\begin{pmatrix}
 0&0&0&1\\
0&0&1&0\\
0&-1&0&0\\
-1&0&0&0
\end{pmatrix}.
$$
The algebraic group $G_0$ is defined to be
$$
G_0:=\left \{g=\begin{pmatrix}
 g_{11}&g_{12}&g_{13}&g_{14}\\
0&g_{22}&g_{23}&g_{24}\\
0&0&g_{33}&g_{34}\\
0&0&0&g_{44}
\end{pmatrix}, \  g^\tr \Phi_0g=\Phi_0,\  g_{ij}\in\C\right \}.
$$
One can verify that it is generated by six one-dimensional subgroups, two of them isomorphic to the multiplicative group $\C^*$ and four
of them isomorphic to the additive group $\C$. Therefore, $G_0$ is of dimension $6$.   
We consider the subset $\tilde \uhp$ of $P$ consisting of matrices
\begin{equation}
 \tau=\label{UERJ?}
\begin{pmatrix}
 \tau_0 &1&0&0\\
1&0&0&0\\
\tau_{1}& \tau_{3}& 1&0\\
\tau_{2} & -\tau_0 \tau_{3}+\tau_{1} & -\tau_0& 1 
\end{pmatrix},
\end{equation}
where $\tau_i,\ i=0,1,2,3,$ are some variables in $\C$ (they are coordinates of the corresponding 
moduli space of polarized Hodge structures and so this moduli space is of dimension four). 
The particular expressions for the $(4,2)$ and $(4,3)$ entries of the above matrix follow 
from the polynomial relations (\ref{12sep2010}) between periods. 
The connection matrix $A$ restricted to $\tilde \uhp$ is
$$
d\tau^\tr\cdot \tau^{-\tr}=
\begin{pmatrix}
0&d\tau_0& -\tau_{3}d\tau_0+d\tau_{1} &  -\tau_{1}d\tau_0+\tau_0 d\tau_{1}+d\tau_{2}\\  
0&0&d\tau_{3}&-\tau_{3}d\tau_0+d\tau_{1} \\ 
0&0&       0&       -d\tau_0\\
0&0&       0&       0 
\end{pmatrix}.        
$$
The Griffiths transversality distribution is given by 
 $$
-\tau_{3}d\tau_0+d\tau_{1}=0,\  \ -\tau_{1}d\tau_0+\tau_0 d\tau_{1}+d\tau_{2}=0.
$$
and so, if we consider $\tau_0$ as an independent parameter defined in a neighborhood of $+\sqrt{-1}\infty$, 
and all other quantities 
$\tau_i$ depending on $\tau_0$, then we have
\begin{equation}
\label{retrovisor}
\tau_{3}=\frac{\partial \tau_1}{\partial \tau_0},\ \frac{\partial\tau_2}{\partial \tau_0}=\tau_{1}-\tau_0 \frac{\partial\tau_1}{\partial \tau_0}. 
\end{equation}
In \cite{ho10*} we have checked the conjectures in \S \ref{13sep2010} for the Calabi-Yau  threefolds of mirror 
quintic type. In this case $\dim(T)=7=1+6$, where $1$ is the dimension of the moduli space of mirror quintic Calabi-Yau
varieties and $6$ is the dimension of the algebraic group $G_0$. Hence, we have 
constructed an algebra generated by seven functions in $\tau_0$, which we call it the algebra of quasi-modular forms attached to mirror quintic Calabi-Yau varieties. 
The image of the geometric period map lies in $\uhp$
with
\begin{equation}
\label{irriteiocara}
 \tau_1=-\frac{25}{12}+\frac{5}{2}\tau_0(\tau_0+1)+
\frac{1}{(2\pi i)^2}\sum_{n=1}^\infty \left (\sum_{d|n}n_d d^3\right )\frac{e^{2\pi i \tau_0 n}}{n^2}.
\end{equation}
Here, $n_d$'s are instanton numbers and  the second derivative of $\tau_1$  with respect to $\tau_0$ is 
the Yukawa coupling. The Yukawa coupling itself turns out to be a quasi-modular form in our context but not its double primitive $\tau_1$.
 The set 
$\uhp$ is a subset of $\tilde \uhp$ defined by (\ref{retrovisor}) and (\ref{irriteiocara}).
As far as I know  this is the first case in which the Griffiths period domain is not Hermitian symmetric  
and we have an attached algebra of quasi-modular forms and even the Global Torelli problem is true; that is, the 
period map is globally injective (see \cite{gri09}). However, note that in \cite{ho10*} we have only used the local injectivity of the period map. 
In this case we can prove that the pullback map from the algebra of regular functions on $\tilde T$ to the algebra of holomorphic functions on $\uhp$ is injective.
Our quasi-modular form theory in this example is attached to mirror quintic Calabi-Yau varieties and not the corresponding period domain.
There are other functions $\tau_1$ attached to one-dimensional families of varieties and the corresponding period maps.
They may have their own quasi-modular forms algebra different from the one explained in this section.

\begin{acknowledgement}
It would like to thank the organisers of the Fields workshop for inviting me to participate and give a talk. 
In particular, I would like to thank Charles F. Doran for his support and stimulating conversation.
\end{acknowledgement}

\def\cprime{$'$} \def\cprime{$'$} \def\cprime{$'$}


\end{document}